\numberwithin{equation}{section}
\begin{document}

\newtheorem{theorem}{Theorem}[section]
\newtheorem{lemma}[theorem]{Lemma}
\newtheorem{proposition}[theorem]{Proposition}
\newtheorem{corollary}[theorem]{Corollary}

\theoremstyle{definition}
\newtheorem{definition}[theorem]{Definition}
\newtheorem{example}[theorem]{Example}

\theoremstyle{remark}
\newtheorem{remark}[theorem]{Remark}

\newenvironment{magarray}[1]
{\renewcommand\arraystretch{#1}}
{\renewcommand\arraystretch{1}}

\newcommand{\mapor}[1]{\smash{\mathop{\longrightarrow}\limits^{#1}}}
\newcommand{\mapin}[1]{\smash{\mathop{\hookrightarrow}\limits^{#1}}}
\newcommand{\mapver}[1]{\Big\downarrow
\rlap{$\vcenter{\hbox{$\scriptstyle#1$}}$}}
\newcommand{\liminv}{\smash{\mathop{\lim}\limits_{\leftarrow}\,}}

\newcommand{\Set}{\mathbf{Set}}
\newcommand{\Art}{\mathbf{Art}}
\newcommand{\solose}{\Rightarrow}

\renewcommand{\bar}{\overline}
\newcommand{\de}{\partial}
\newcommand{\debar}{{\overline{\partial}}}
\newcommand{\per}{\!\cdot\!}
\newcommand{\Oh}{\mathcal{O}}
\newcommand{\sA}{\mathcal{A}}
\newcommand{\sB}{\mathcal{B}}
\newcommand{\sC}{\mathcal{C}}
\newcommand{\sF}{\mathcal{F}}
\newcommand{\sG}{\mathcal{G}}
\newcommand{\sH}{\mathcal{H}}
\newcommand{\sI}{\mathcal{I}}
\newcommand{\sL}{\mathcal{L}}
\newcommand{\sM}{\mathcal{M}}
\newcommand{\sP}{\mathcal{P}}
\newcommand{\sU}{\mathcal{U}}
\newcommand{\sV}{\mathcal{V}}
\newcommand{\sX}{\mathcal{X}}
\newcommand{\sY}{\mathcal{Y}}

\newcommand{\Aut}{\operatorname{Aut}}
\newcommand{\Mor}{\operatorname{Mor}}
\newcommand{\Def}{\operatorname{Def}}
\newcommand{\Hom}{\operatorname{Hom}}
\newcommand{\Alt}{\operatorname{Alt}}
\newcommand{\Tot}{\operatorname{Tot}}
\newcommand{\HOM}{\operatorname{\mathcal H}\!\!om}
\newcommand{\DER}{\operatorname{\mathcal D}\!er}
\newcommand{\Spec}{\operatorname{Spec}}
\newcommand{\Der}{\operatorname{Der}}
\newcommand{\Tor}{{\operatorname{Tor}}}
\newcommand{\Ext}{\operatorname{Ext}}
\newcommand{\Sym}{\operatorname{Sym}}
\newcommand{\End}{{\operatorname{End}}}
\newcommand{\END}{\operatorname{\mathcal E}\!\!nd}
\newcommand{\Image}{\operatorname{Im}}
\newcommand{\coker}{\operatorname{coker}}
\newcommand{\Id}{\operatorname{Id}}
\newcommand{\ad}{\operatorname{ad}}
\newcommand{\Span}{\operatorname{Span}}

\newcommand{\ghirigoro}{\rightsquigarrow}
\renewcommand{\Hat}[1]{\widehat{#1}}
\newcommand{\dual}{^{\vee}}
\newcommand{\desude}[2]{\dfrac{\de #1}{\de #2}}

\newcommand{\A}{\mathbb{A}}
\newcommand{\N}{\mathbb{N}}
\newcommand{\R}{\mathbb{R}}
\newcommand{\Z}{\mathbb{Z}}
\renewcommand{\H}{\mathbb{H}}
\newcommand{\C}{\mathbb{C}}
\newcommand{\proj}{\mathbb{P}}
\newcommand{\K}{\mathbb{K}\,}


\newcommand{\contr}{{\mspace{1mu}\lrcorner\mspace{1.5mu}}}
\newcommand{\wedgebar}{\;{\scriptstyle\mathchar'26\mkern-10.5mu\wedge}\;}

\newcommand{\bi}{\boldsymbol{i}}
\newcommand{\bl}{\boldsymbol{l}}

\newcommand{\MC}{\operatorname{MC}}
\newcommand{\Coder}{{\operatorname{Coder}}}

\newcommand{\QUI}{\bigskip\bigskip\textbf{********** Segnaposto ********}\bigskip\bigskip}

\title{Some examples of DG-Lie formality transfer}
\date{15 January 2026}
\subjclass{17B56,17B70,16T15}
\keywords{Differential graded Lie algebras, formality transfer}

\author{Marco Manetti}
\address{\newline
Universit\`a degli studi di Roma La Sapienza,\hfill\newline
Dipartimento di Matematica Guido Castelnuovo,\hfill\newline
P.le Aldo Moro 5, I-00185 Roma, Italy.}
\email{manetti@mat.uniroma1.it}
\urladdr{sites.google.com/uniroma1.it/marcomanetti/}

\author{Gabriele Rossetti}
\email{gabriele.rossetti2809@gmail.com}

\begin{abstract} We give some new applications of the formality transfer theorem for DG-Lie algebras.
\end{abstract}

\maketitle

\section{Introduction}

The goal of this paper is to give a proof and some applications of  the following formality transfer criterion.

\begin{theorem}[formality transfer]\label{thm.main} 
Let $f\colon L\to M$ be a morphism of DG-Lie algebras and consider the 
$L$-module structure on $M$ induced by $f$.

If $M$ is formal, $f$ is injective and $f(L)$ is a direct summand of the $L$-module $M$,  then  $L$ is also formal.
\end{theorem}

This theorem has been already applied several times in literature. For instance, it was used in the proof of Kaledin--Lehn formality conjecture for polystable sheaves on Enriques and bielliptic surfaces \cite{BMM2}.  
Our previous reference for  Theorem~\ref{thm.main} 
was \cite{Ma15}, since it is an immediate consequence of Theorem~6.8 therein.
Unfortunately, the proof of \cite[Theorem~6.8]{Ma15} contains a mistake; we provide a corrigendum in the last section of this paper that still implies Theorem~\ref{thm.main}; moreover, a direct and easier proof is given in Section~\ref{sec.prova} of this paper.

Then 
we give some further applications of formality transfer. In particular, we prove Theorem~\ref{thm.formalityUL}  about the equivalence of Lie and associative formality for universal enveloping algebras, and Theorem~\ref{thm.KSunderaction},  about the persistence of Kodaira--Spencer formality under free actions of finite groups.

\bigskip
\section{Basic facts about DG-Lie formality}
\label{sec.simple}

Throughout the whole paper, every differential graded (DG) vector space is intended to be defined over a field
$\K$ of characteristic 0, is graded over the integers,  and has a differential of degree $+1$. Given a homogeneous vector  $x$ in a DG-vector space, we shall indicate its degree by $\bar{x}$. We assume that the reader is also familiar with the notion of (differential) graded Lie algebra.

If $L$ be a DG-Lie algebra, then its cohomology $H^*(L)$ inherits a natural structure of DG-Lie algebra with trivial differential. By definition, $L$ is called formal  if there is a zigzag of quasiisomorphisms of DG-Lie algebras connecting $L$ with  $H^*(L)$.

The notion of formality of a differential graded Lie algebra  has received a great attention following  the papers of Goldman, Millson \cite{GoMil1} and Kontsevich \cite{K}. 
In  \cite{GoMil1} the authors proved the 
formality of the differential graded Lie algebra of differential forms with values in certain flat bundles of Lie algebras; as a   consequence of this fact they proved that the moduli space of certain representations of the fundamental  group of a   compact K\"{a}hler manifold has at most quadratic singularities. The key point is that  the deformation functor associated to a DG-Lie algebra $L$ is invariant under quasiisomorphism and therefore it takes a very simple form whenever $L$ is formal. 

In the paper \cite{K}  Kontsevich proved that, when $A$ is the algebra of smooth functions on a differentiable 
manifold, then the natural DG-Lie algebra structure on the 
Hochschild cohomology complex of $A$ with coefficients in $A$ is formal, thereby   proving that every finite dimensional Poisson manifolds admits a deformation quantization. 
More recent results about formality of DG-Lie algebras and applications to deformation theory include \cite{BMM2,BMM1,CM,FMpoisson,indam,MartPadova,MO}.

The following standard lemma  gives some useful equivalent conditions for formality. 

\begin{lemma}\label{lemma.condizioniformalita} 
For a DG-Lie algebra $L$ over a field of characteristic 0, the following conditions are equivalent:
\begin{enumerate}
\item $L$ is formal;
\item there exists a span $L\leftarrow M\rightarrow H^*(L)$ of surjective quasi-isomorphisms of DG-Lie algebras;  

\item there exists an $L_{\infty}$-morphism $H^*(L)\ghirigoro L$ lifting  the identity in cohomology;

\item there exists an $L_{\infty}$-morphism $L\ghirigoro H^*(L)$ extending the identity in cohomology.

\end{enumerate}
\end{lemma}

\begin{proof} This is a well-known result. For a complete proof one can see, for instance, the book \cite{LMDT}.
\end{proof}

A first application of the above lemma is the following `cheap' formality transfer 
criteria.

\begin{proposition}\label{prop.cheap} 
Let $f\colon L\to M$ be a morphism of DG-Lie algebras.
\begin{enumerate}

\item If $M$ is formal and $f\colon H^*(L)\to H^*(M)$ has a left inverse in the category of graded Lie algebras, then  $L$ is also formal.

\item If $L$ is formal and $f\colon H^*(L)\to H^*(M)$ has a right inverse in the category of graded Lie algebras, then  $M$ is also formal.

\end{enumerate}
\end{proposition}

\begin{proof}
We only prove the first item; the proof of the second is completely similar. By assumption there exists an $L_\infty$-morphism  $M\ghirigoro H^*(M)$ extending the identity in cohomology and a morphism of graded Lie algebras $g\colon H^*(M)\to H^*(L)$ such that $gf=\Id$. Then the composition $L\xrightarrow{f}M\ghirigoro H^*(M)\xrightarrow{g}H^*(L)$ is an  $L_\infty$-morphism that extend the identity in cohomology.
\end{proof}

At first sight,  Proposition~\ref{prop.cheap} appears conceptually interesting because it is the natural extension of the following very useful transfer criteria for homotopy abelianity; a DG-Lie algebra is called homotopy abelian if it is quasi-isomorphic to an abelian DG-Lie algebra.

\begin{lemma}\label{lem.transferabelianity} 
Let $f\colon L\to M$ be a morphism of DG-Lie algebras: 
\begin{enumerate}

\item if $M$ is homotopy abelian and $f\colon H^*(L)\to H^*(M)$ has a left inverse in the category of graded vector spaces, then also $L$ is homotopy abelian;

\item if $L$ is homotopy abelian and $f\colon H^*(L)\to H^*(M)$ has a right inverse in the category of graded vector spaces, then also $M$ is homotopy abelian.

\end{enumerate}
\end{lemma}

The reader can see  \cite[Cor. 6.1.3]{LMDT} and \cite{BaMacoisotropic,FMpoisson,algebraicBTT, KKP} for various proofs of Lemma~\ref{lem.transferabelianity}  and applications to deformation theory.

Unfortunately, for now, the Proposition~\ref{prop.cheap} does not seem to provide any interesting geometric application beyond the ones already included in Lemma~\ref{lem.transferabelianity}.

\bigskip
\section[The Chevalley--Eilenberg double complex]{The Chevalley--Eilenberg double complex and proof of Theorem~\ref{thm.main}}
\label{sec.prova}

It is well understood, see for instance \cite{halsta,hinich,kaledin,lunts,NijRich67,Sa17}, that the right  context for obstructions to formality of DG-Lie algebras is Chevalley--Eilenberg cohomology theory; the classical (non-graded) definition \cite{CE,HS,Ja} generalizes easily to the differential graded case and gives cohomology groups $H_{CE}^p(L,M)$, where $L$ is a graded Lie algebra and $M$ is a graded $L$-module, see  Definition~\ref{def.CEcohomology} below.\medskip

Let $(L,d,[-,-])$ be a DG-Lie algebra. By definition, an $L$-module is a DG-vector space $M$ equipped with a morphism of DG-vector spaces
\[
M\otimes L\xrightarrow{\;*\;} M,\qquad m\otimes x\mapsto m*x,
\]
such that
\[
m*[x,y]=(m*x)*y-(-1)^{\bar{x}\,\bar{y}}(m*y)*x.
\]
For instance, if $f\colon L\rightarrow M$ is a morphism of DG-Lie algebras, then $M$ is an $L$-module via the adjoint representation $m*x=[m,f(x)]$.

Given a DG-Lie algebra $L$ and an $L$-module $M$, we recall from \cite{Ma15} the construction of the associated Chevalley--Eilenberg double complex $(CE(L,M)^{*,*}, \delta,\bar{\delta})$.

We have:

\begin{enumerate}
\item  $CE(L,M)^{p,q}= \Hom^q_\K(L^{\wedge p}, M)$; in particular  
$CE(L,M)^{p,q}=0$ for $p<0$ and $CE(L,M)^{0,q}=M^q$.

\item  $\bar{\delta}\colon CE(L,M)^{p,q}\rightarrow CE(L,M)^{p,q+1}$ is the natural differential of the complex $\Hom^*_\K(L^{\wedge p}, M)$, that is 
    \[
    (\bar{\delta}\phi)(x_1,\dots,x_p)=d(\phi(x_1,\dots,x_p))-\sum_{i=1}^{p}(-1)^{\bar{\phi}+\bar{x_1}+\dots+\bar{x_{i-1}}}\phi(x_1,\dots,dx_i,\dots,x_p)
    \]
\item $\delta$ is the natural extension of the classical Chevalley--Eilenberg differential (with
an inessentially different choice of signs motivated by the d\'ecalage isomorphisms, cf.  \cite[Section 7]{Ma15}). More precisely:

\begin{enumerate}
\item for  $m\in CE(L,M)^{0,q}=M^q$ we have $(\delta m)(x)=(-1)^{q}m*x$;\smallskip

\item  for $\phi\in CE(L,M)^{1,q}=\Hom_{\K}^q(L,M)$ we have 
\[(\delta\phi)(x,y)=(-1)^{q+1}\left(\phi(x)*y-
(-1)^{\bar{x}\;\bar{y}}\phi(y)*x-\phi([x,y])\right);\]
\smallskip

\item for $p\ge 2$ and  $\phi\in CE(L,M)^{p-1,q}=\Hom_{\K}^q(L^{\wedge p-1},M)$  we have:
\[ \begin{split}
(\delta \phi)(x_1,\ldots,x_p)&=
(-1)^{q+p-1}\sum_{i}\chi_i\, \phi(x_1,\ldots,\widehat{x_i},\ldots,x_p)*x_i\\
&\quad+(-1)^{q+p}
\sum_{i<j}\chi_{i,j}\, \phi(x_1,\ldots,\widehat{x_i},\ldots,\widehat{x_j},\ldots,x_p,[x_i,x_j]),
\end{split}\]

where the  $\chi_i,\chi_{i,j}\in \{\pm 1\}$ are the antisymmetric Koszul signs,  determined by the equalities in 
$L^{\wedge p}$:
\[ x_1\wedge\cdots\wedge x_p=\chi_i\;
x_1\wedge\cdots\widehat{x_i}\cdots\wedge x_p\wedge x_i=\chi_{i,j}\;
 x_1\wedge\cdots\widehat{x_i}\cdots\widehat{x_j}\cdots\wedge x_p\wedge x_i\wedge x_j\;.\]
\end{enumerate}
\end{enumerate}

\begin{definition}\label{def.CEcohomology} Given $L,M$ as above, the Chevalley--Eilenberg cohomology of $L$ with coefficients in $M$ is the cohomology of the total complex $\Tot^{\Pi}(CE(L,M))$ 
(see \cite[1.2.6]{weibel}):
\[ H^*_{CE}(L,M)=H^*(\Tot^{\Pi}(CE(L,M))),\quad \Tot^{\Pi}(CE(L,M))^n=\prod_{p+q=n}CE(L,M)^{p,q}.\]
\end{definition}

It is worth to point out that, when both $L$ and $M$ have trivial differential, then $\Tot^{\Pi}(CE(L,M))$ is the direct product of the subcomplexes $(CE(L,M)^{*,q},\delta)$.

We denote by $(E(L,M)_r^{p,q},d_r)$ the cohomological spectral sequence associated to the first filtration of the double complex $(CE(L,M)^{*,*}, \delta,\bar{\delta})$: more precisely 
\[ F^pCE(L,M)^{*,*}=\prod_{i\ge p}CE(L,M)^{i,*}.\]

In particular, 
\[E(L,M)_0^{p,q}=\frac{F^0CE(L,M)^{p,q}}{F^1CE(L,M)^{p,q}}=\Hom_{\K}^q(L^{\wedge p},M),\qquad d_0=\bar{\delta}.\] By the K\"{u}nneth formula
there exist  natural isomorphisms 
\[ E(L,M)_1^{p,q}\xrightarrow{\simeq}\Hom_{\K}^q(H^*(L)^{\wedge p},H^*(M))=E(H^*(L),H^*(M))_1^{p,q}
,\qquad d_1=\delta,\]
and therefore 
\[ H_{CE}^n(H^*(L),H^*(M))\cong \prod_{p\ge 0}E(L,M)_2^{p,n-p}.\]
In particular, \[   E(L,M)^{1,*}_2=E(H^*(L),H^*(M))^{1,*}_2=
\frac{\{\text{derivations}\;  H^*(L)\to H^*(M)\}}{\{\text{inner derivations}\}}.\]

\medskip

If $f\colon L\to M$ is a morphism of DG-Lie algebras, then  a particular element 
$e_f\in E(L,M)^{1,0}_2$, called the \emph{Euler class}, is defined. By definition, $e_f$ is the class of the Euler derivation 
\[ e_f\colon H^*(L)\to H^*(M),\qquad \epsilon(x)=\bar{x}\, f(x),\]
modulo inner derivations. Equivalently $e_f\in E(L,M)^{1,0}_2$ is the $d_1$-cohomology class of 
\[\epsilon\in E(L,M)_1^{1,0}=\Hom^0_{\K}(H^*(L),H^*(M)),\qquad \epsilon(x)=\bar{x}\, f(x).\]

Whenever $d_2(e_f)=0$ we continue to denote by $e_f\in E(L,M)^{1,0}_3$ its cohomology class; recursively, if $d_2(e_f)=\cdots=d_r(e_f)=0$ for some $r\ge 2$ we still denote by  $e_f\in E(L,M)^{1,0}_{r+1}$ the class of the Euler derivation.

Finally, both the spectral sequence and the Euler class behave functorially with respect to morphisms of DG-Lie algebras. This means that, given two morphisms $L\xrightarrow{f}M\xrightarrow{g}N$ of DG-Lie algebras, we have two morphisms of spectral sequences 
\[ E(L,M)_*\xrightarrow{\;g_*\;} E(L,N)_*\xleftarrow{\;f^*\;}E(M,N)_*,\]
such that $g_*(e_f)=e_{gf}=f^*(e_g)$.
According to the previous description of the page $E(L,N)_1$,  
if $g$ (resp.: $f$) is a quasi-isomorphism, then $g_*$ (resp.: $f^*$) is an isomorphism at the  page
$E(L,N)_r$, for every $r\ge 1$.

When $f$ is the identity map on a DG-Lie algebra $L$ we simply write
\[ e_L\in E(L,L)^{1,0}_2=\frac{\;\Der^0_{\K}(H^*(L),H^*(L))\;}{
\{[x,-]\mid x\in H^0(L)\}},\qquad e_L(x)=\bar{x}\cdot x,\]
instead of $e_{\Id_L}$. It is essential to observe that, while $e_L$ depends only on the 
graded Lie algebras  $H^*(L)$, its differential  
\[d_2(e_L)\in E(L,L)_2^{3,-1}\subseteq H_{CE}^2(H^*(L),H^*(M))\] 
depends on $L$. In fact, the vanishing of $d_2(e_L)$ is essentially equivalent to the vanishing of all triple Massey products \cite{simi}.

\begin{theorem}[{\cite[Theorem~3.3]{Ma15}}]\label{thm.formalitycriteria} 
Let $(E(L,L)^{p,q}_r,d_r)$
be the Chevalley--Eilenberg spectral sequence of  a differential graded Lie algebra $L$. 
Then the following conditions are equivalent:

\begin{enumerate}

\item\label{it1.thm.formalitycriteria} $L$ is formal;

\item the spectral sequence $E(L,L)^{p,q}_r$ degenerates at $E_2$;

\item\label{it3.thm.formalitycriteria} $d_r(e_L)=0\in E(L,L)^{r+1,1-r}_r$ for every $r\ge 2$.
\end{enumerate}
\end{theorem}

We are now ready to prove the formality transfer criterion.

\begin{proof}[Proof of Theorem~\ref{thm.main}]
Let $f\colon L\to M$ be an injective morphism of DG-Lie algebras such that $M$ is formal and 
$M=f(L)\oplus N$ as $L$-modules.
By Theorem~\ref{thm.formalitycriteria} we have $d_r(e_M)=0$ for every $r\ge 2$ and then 
$d_r(e_f)=d_r(f^*e_M)=f^*(d_r(e_M))=0$ for every $r\ge 2$.

Let $r\ge 2$ and assume by induction that $d_i(e_L)=0$ for every $2\le i<r$. 
Then $f_*(d_r(e_L))=d_r(e_f)=0$. On the other hand, 
the Chevalley--Eilenberg complex of the pair $L,M$ is the direct sum of two subcomplexes
\[ CE(L,M)=CE(L,f(L))\oplus CE(L,N).\]
Therefore the spectral sequence $E(L,L)^{p,q}_r$ is a direct summand of 
$E(L,M)^{p,q}_r$; in particular, the map $f_*\colon E(L,L)_r^{r+1,-r}\to 
E(L,M)^{r+1,1-r}_r$ is injective and then $d_r(e_L)=0$.
\end{proof}

It is worth to mention the existence of a different, and also useful, criterion for formality based on the study of gauge equivalence of the Kaledin class;  we refer to the original paper by Kaledin \cite{kaledin}, see also \cite{lunts}, as well as the more recent contributions \cite{emprin1,emprin}.

\bigskip
\section{Examples and applications}

\subsection{Universal Enveloping Algebras}
For every DG-Lie algebra $L$ we denote by $U(L)$ its universal enveloping algebra; it is defined, mutatis mutandis, as in the classical case, i.e., as the quotient $T(L)/I$ where
$T(L)$ is the DG-associative tensor algebra generated by the differential graded vector space underlying $L$, and $I$ is the ideal generated by elements of type $x\otimes y-(-1)^{\bar{x}\,\bar{y}\,}y\otimes x-[x,y]$.

\begin{theorem}\label{thm.formalityUL}
For a DG-Lie algebra $L$ the following properties are equivalent:
\begin{enumerate}
\item $L$ is formal;
\item $U(L)$ is formal as differential graded associative algebra;
\item $U(L)$ is formal as differential graded Lie algebra.
\end{enumerate}
\end{theorem}

The equivalence of the first two items in Theorem~\ref{thm.formalityUL} was proved, in a different way, by Saleh in \cite{Sa17}.

\begin{proof} The implication $2)\Rightarrow 3)$ is obvious and the 
implication $1)\Rightarrow 2)$ follows easily from the existence of the natural isomorphism 
of graded associative algebras $H^*(U(L))=U(H^*(L))$ \cite[B.2.1]{Qui}.

The proof of the implication  $3)\Rightarrow 1)$  relies on the 
Poincar\'e--Birkhoff--Witt (PBW) theorem. If $S(L)=\oplus_{n=0}^\infty L^{\odot n}$ denotes the symmetric DG-algebra generated by $L$ and $i\colon L\to U(L)$ is the natural map, the PBW theorem \cite[B.2.3]{Qui} asserts in particular that the linear map 
 \[
    e\colon S(L)\xrightarrow{\;\simeq\;} U(L),\quad e(x_1\odot\dots\odot x_n)=\frac{1}{n!}\sum_{\sigma\in\Sigma_n}\epsilon(\sigma)\,i(x_{\sigma(1)})\cdots i(x_{\sigma(n)}),
    \]
where $\epsilon(\sigma)$ is the Koszul sign and $\Sigma_n$ the symmetric group, is an isomorphism of DG-vector spaces.   It is well-known that $e$ is also an isomorphism of graded coalgebras, although this property is not relevant in this proof.

Given  $x\in L$, we denote by $r_x\colon S(L)\to S(L)$ the unique derivation of the symmetric algebra such that $r_x(y)=[y,x]$ for every $y\in L$.   A straightforward computation, see \cite[Lemma 3.3.5]{Loday} for further details, shows that 
for every $x\in L$ and every $z\in S(L)$ we have $e(r_x(z))=[e(z),i(x)]$.

This implies that $H:=e(\oplus_{n\not=1}L^{\odot n})$ is an $L$-submodule of $U(L)$ and we have 
a direct sum  decomposition 
$U(L)=i(L)\oplus H$. According to Theorem~\ref{thm.main}, if $U(L)$ is formal as a DG-Lie algebra, then also $L$ is formal.
\end{proof}

\subsection{Invariant subalgebras}

We begin with an equivalent formulation of  Theorem~\ref{thm.main}.

\begin{corollary}\label{cor.main3}
Let $M$ be a formal DG-Lie algebra and let $L\subseteq M$ be a DG-Lie subalgebra. Assume that there exists a retraction of $L$-modules $p\colon M\to L$, i.e. a 
morphism of differential graded vector spaces  $p\colon M\to L$ such that 
\begin{equation}\label{equ.proiezione} 
p(x)=x,\quad p([x,y])=[x,p(y)]\quad\text{ for every }\quad x\in L,\, y\in M.\end{equation}
Then also $L$ is formal. 
\end{corollary}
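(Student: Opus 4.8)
The plan is to deduce the statement from Corollary~\ref{cor.main2}, applied to the inclusion $f\colon L\hookrightarrow M$ (which is tautologically an injective morphism of DG-Lie algebras): once this is set up, all that remains is to produce an $L$-submodule $N\subseteq M$ such that $M=f(L)\oplus N=L\oplus N$ as $L$-modules, the formality of $L$ then being immediate from the formality of $M$. The obvious candidate is $N=\ker p$.

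First I would observe that $N$ is a differential graded subspace of $M$, which is automatic since $p$ is a morphism of DG-vector spaces and hence commutes with the differential. The substantive point is that $N$ is an $L$-submodule for the adjoint action $y*x=[y,x]$. Given $y\in N$ and $x\in L$, graded antisymmetry gives $[y,x]=-(-1)^{\bar{x}\,\bar{y}}[x,y]$, so that $p([y,x])=-(-1)^{\bar{x}\,\bar{y}}p([x,y])=-(-1)^{\bar{x}\,\bar{y}}[x,p(y)]=0$ by~\eqref{equ.proiezione} together with $p(y)=0$. Carrying out the same manipulation with $p(y)$ left arbitrary, and using that $p$ preserves degrees, one gets $p([y,x])=[p(y),x]$ for all $y\in M$ and $x\in L$; in other words $p$ is automatically a morphism of $L$-modules, which is a convenient by-product.

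Next I would verify the splitting $M=L\oplus N$. For $y\in M$, write $y=p(y)+(y-p(y))$; here $p(y)\in L$ by the definition of $p$, while $p(y-p(y))=p(y)-p(p(y))=p(y)-p(y)=0$ because $p$ restricts to the identity on $L$, so $y-p(y)\in N$; and $L\cap N=0$, again because $p|_{L}=\Id_{L}$. Since $L$ and $N$ are both $L$-submodules and DG-subspaces of $M$, this exhibits $f(L)=L$ as a direct summand of $M$ as an $L$-module. Corollary~\ref{cor.main2} then applies and yields the formality of $L$. To justify the assertion that Corollary~\ref{cor.main3} is merely a reformulation of Corollary~\ref{cor.main2}, I would also note the converse: if $f\colon L\hookrightarrow M$ is injective with $M=f(L)\oplus C$ for an $L$-submodule $C$, then, identifying $L$ with $f(L)$, the projection $p\colon M\to L$ with kernel $C$ is a chain map, restricts to the identity on $L$, and satisfies $p([x,y])=[x,p(y)]$ because the decomposition $y=p(y)+c$ is $L$-equivariant.

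There is no serious obstacle in this argument; the only mildly delicate point is the passage from the one-sided identity in~\eqref{equ.proiezione} to the vanishing of $p([y,x])$ for $y\in N$ and $x\in L$, and this is disposed of by graded antisymmetry of the bracket as above.
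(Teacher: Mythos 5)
Your proposal is correct and is essentially the paper's own proof, which likewise decomposes $M=L\oplus\ker p$, notes $[L,\ker p]\subseteq\ker p$, and invokes Corollary~\ref{cor.main2}; you have merely spelled out the routine verifications (and the converse direction) that the paper leaves implicit.
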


\begin{proof}
We have  
$M=L\oplus \ker p$ with $[L,\ker p]\subseteq \ker p$, and the formality of $L$ follows from Theorem~\ref{thm.main}.
\end{proof}

A typical situation where Corollary~\ref{cor.main3} applies  is when $L=M^G$ is the subalgebra of invariant elements under a finite group $G$ of automorphisms of $M$, and $p$ is given by the usual 
operator
\[ p(x)=\frac{1}{|G|}\sum_{g\in G}g(x).\]

More generally, the same as above holds when $G$ is a linearly reductive algebraic group acting on $M$ and every $M^i$ is a finitely supported representation (we refer to \cite{BMM1} for the definition and the main properties of finitely supported representations).

A geometric example of this situation is described in \cite[Section~5]{BMM2}, where
$L$ is the DG-Lie algebra of derived endomorphisms of a polystable sheaf  on a surface with torsion canonical bundle.

\subsection{Quotients by finite free regular actions}

Let $X$ be a smooth manifold and let $L=KS_X$ be its 
Kodaira--Spencer DG-Lie algebra; recall that 
$KS_X$ is the homotopy class of the DG-Lie algebra controlling infinitesimal deformations of $X$ (see e.g. \cite{LMDT}) and it is represented either by:
\begin{enumerate}
\item (complex case) the Dolbeault complex of the holomorphic tangent sheaf, or
\item (algebraic case) the Thom--Whitney totalization of  the semicosimplicial  
algebra of \v{C}ech cochains of the tangent sheaf over an affine open cover.
\end{enumerate}

Following \cite{indam}, we say that $X$ is Kodaira--Spencer formal if the DG-Lie algebra 
$KS_X$ is formal; by general fact, if a compact complex manifold $X$ is Kodaira--Spencer formal, then the base germ of its semiuniversal deformation is a quadratic singularity.
It is worth recalling  the main result of \cite{indam}: if $X,Y$ are compact, K\"{a}hler and
Kodaira--Spencer formal complex manifolds, then also the product $X\times Y$ is 
Kodaira--Spencer formal; for instance,
the product 
$X=T\times \mathbb{P}^r$, with $T$ a complex torus, 
is Kodaira--Spencer formal (when $r>0$ and $\dim T>1$, Kodaira and Spencer proved that $X$ has obstructed deformations \cite[pag. 436]{KS}).

\begin{theorem}\label{thm.KSunderaction} 
Let $X$ be a smooth projective manifold over an algebraically closed field 
of characteristic 0 and let $G$ be a finite group acting 
freely on $X$. If $X$ is Kodaira--Spencer formal, then also the quotient 
$X/G$ is Kodaira--Spencer formal.
\end{theorem}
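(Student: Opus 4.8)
The plan is to realize $X/G$ via a retraction of Kodaira--Spencer algebras and then invoke Corollary~\ref{cor.main3}. Let $\pi\colon X\to Y:=X/G$ be the quotient map, which is a finite \'etale Galois covering since $G$ acts freely. The first step is to produce, at the level of DG-Lie algebras representing $KS_X$ and $KS_Y$, an injective morphism $KS_Y\to KS_X$ together with a retraction of $KS_Y$-modules. Working in the algebraic model, choose a $G$-invariant affine open cover $\sU=\{U_i\}$ of $X$ that is the preimage $\pi^{-1}(\sV)$ of an affine open cover $\sV=\{V_i\}$ of $Y$; since $\pi$ is \'etale, $\pi^*T_Y\cong T_X$ canonically and $G$-equivariantly, so $T_Y(V_i)=T_X(U_i)^{G}=(\pi_*T_X)^G(V_i)$. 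Passing to \v{C}ech cochains and then to the Thom--Whitney totalization, which is a functorial construction commuting with taking $G$-invariants (since we are in characteristic $0$ and $G$ is finite, the averaging operator $\tfrac1{|G|}\sum_{g\in G}g$ is exact and commutes with $\Tot$), we obtain a DG-Lie algebra $M$ representing $KS_X$, carrying a $G$-action by DG-Lie automorphisms, such that $L:=M^G$ represents $KS_Y$.

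Next, I would check that the inclusion $L=M^G\hookrightarrow M$ admits the averaging retraction $p(x)=\tfrac1{|G|}\sum_{g\in G}g(x)$ as a retraction of $L$-modules in the precise sense of \eqref{equ.proiezione}: clearly $p$ is a morphism of DG-vector spaces with $p(x)=x$ for $x\in L$, and for $x\in L$, $y\in M$ one has $p([x,y])=\tfrac1{|G|}\sum_g g([x,y])=\tfrac1{|G|}\sum_g [g(x),g(y)]=\tfrac1{|G|}\sum_g[x,g(y)]=[x,p(y)]$, using that each $g$ is a Lie automorphism and $g(x)=x$. Thus the hypotheses of Corollary~\ref{cor.main3} are satisfied with this $L\subseteq M$ and this $p$, and since $M\simeq KS_X$ is formal by assumption, the corollary yields that $L\simeq KS_Y=KS_{X/G}$ is formal, which is the assertion.

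The main obstacle, and the step deserving the most care, is the first one: verifying that the functorial algebraic model of $KS_X$ can be chosen $G$-equivariantly and that its subalgebra of invariants is genuinely a model for $KS_{X/G}$, rather than merely abstractly quasi-isomorphic to it by a non-compatible zig-zag. Concretely this amounts to: (i) the existence of a $G$-stable affine cover descending to an affine cover of $Y$ (automatic since $Y$ is projective and $\pi$ finite, so one pulls back an affine cover of $Y$); (ii) the identification $(\pi_*T_X)^G\cong T_Y$ of sheaves on $Y$, which is the \'etale descent of the tangent sheaf along a finite Galois cover; and (iii) commutation of the Thom--Whitney (or Dolbeault, in the complex case) totalization with the exact averaging projector, so that $\Tot(\v{C}(\sU,T_X))^G=\Tot(\v{C}(\sU,T_X)^G)=\Tot(\v{C}(\sV,T_Y))$. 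Granting these essentially formal points, the remainder is the short computation above plus a citation of Corollary~\ref{cor.main3}. One should also remark that the complex-analytic case is handled identically, replacing the \v{C}ech--Thom--Whitney model by the Dolbeault complex $A^{0,*}_X(T_X)$, on which $G$ acts and whose invariants compute $A^{0,*}_Y(T_Y)$ because $\pi$ is a local biholomorphism.
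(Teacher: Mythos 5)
Your proposal is correct and follows essentially the same route as the paper: pull back an affine cover of $Y=X/G$ to $X$, identify $\Gamma(U,\Theta_Y)$ with the $G$-invariants of $\Gamma(\pi^{-1}(U),\Theta_X)$, observe that the averaging operator gives a retraction of modules compatible with restrictions and hence with the Thom--Whitney totalization, and conclude by Corollary~\ref{cor.main3}. The points you flag as needing care (descent of the tangent sheaf along the finite \'etale cover and compatibility of averaging with totalization) are exactly the ones the paper's proof relies on, stated there somewhat more tersely.
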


\begin{proof}  We refer to \cite{LMDT} for the definition and the main properties of the Thom--Whitney totalization functor.

Setting by $Y=X/G$, 
by standard  facts in algebraic geometry, the variety $Y$ is smooth and projective and the quotient map $\pi\colon X\to Y$ is a finite morphism.

For every open subset  $U\subseteq Y$ the group $G$ acts on the space of vector fields
$\Gamma(\pi^{-1}(U),\Theta_X)$ and the morphism 
\[ \gamma_U\colon \Gamma(\pi^{-1}(U),\Theta_X)\to \Gamma(\pi^{-1}(U),\Theta_X)^G=\Gamma(U,\Theta_Y)\]
satisfies the condition \eqref{equ.proiezione}.

Taking an affine open cover $\sU=\{U_i\}$ of $Y$, then $\sV=\{\pi^{-1}(U_i)\}$ is an affine open cover of $X$ and the respective 
Thom--Whitney 
totalizations $\Tot(\sU,\Theta_Y)$ and $\Tot(\sV,\Theta_X)$ are 
DG-Lie algebras representing $KS_Y$ and $KS_X$, respectively.

Since $\gamma_U$ commutes with restrictions to open subsets, it induces a 
retraction of $\Tot(\sU,\Theta_Y)$-modules  
$\Tot(\sV,\Theta_X)\to \Tot(\sU,\Theta_Y)$ and we can apply Corollary~\ref{cor.main3}
\end{proof}

Some results similar to Theorem~\ref{thm.KSunderaction}, with homotopy abelianity and unobstructedness of the associated deformation functor were already known,  for instance 
\cite[Ex. 15]{CCK}
and \cite[Thm. 6.2]{algebraicBTT}.

\subsection{A non-example of formality transfer}\label{subsec.nonexample}
Let $M$ be the DG-Lie algebra generated, as a graded vector space, by three vectors $e_1,e_2,e_3$ of degree $+1$ and two vectors $h_1,h_2$ of degree $+2$.  Define 
the differential $d$  by setting $de_1=de_2=0$, $de_3=h_1$, and the nontrivial brackets $[e_i,e_j]$ as
\[ [e_1,e_1]=-h_2,\quad [e_2,e_2]=h_2-h_1,\quad [e_2,e_3]=[e_3,e_2]=h_2.\]
For a generic element $x=x_1e_1+x_2e_2+x_3e_3\in M^1$ the Maurer--Cartan equation $dx+[x,x]/2$ is the same as the system of two equations
\[ 2x_3-x_2^2=0,\qquad  -x_1^2+x_2^2+2x_2x_3=0,\]
that, after the elimination of $x_3$, becomes $x_1^2=x_2^2+x_2^3$.

We have  $H^1(M)=\Span(e_1,e_2)$, $H^2(M)=M^2/\K h_1$,  $H^i(M)=0$ for $i\not=1,2$ and the induced bracket 
$[-,-]\colon H^1(M)\times H^1(M)\to H^2(M)$ is a  nondegenerate symmetric bilinear form. Therefore, by either the Morse lemma (see \cite[Appendix A1]{simi}) or  standard formality criteria (see e.g. \cite[Thm. 4.1.3]{hinich} or \cite[text between 3.7 and 3.8]{Ma15}),   the graded Lie algebra $H^*(M)$ is intrinsically formal, hence $M$ is formal.

On the other hand,  for every parameter $\lambda\in \K$, consider the subalgebra 
$L_\lambda\subset M$ spanned by $\lambda e_1+e_2,e_3,h_1,h_2$; thus 
$H^1(L_\lambda)=\Span(\lambda e_1+e_2)$, $H^2(L_{\lambda})=M^2/\K h_1$,  $H^i(L_{\lambda})=0$ for $i\not=1,2$. 
Since $[\lambda e_1+e_2,\lambda e_1+e_2]\equiv (1-\lambda^2)h_2\pmod{h_1}$, the graded Lie algebra $H^*(L_{\lambda})$ is intrinsically formal for every $\lambda\not=\pm 1$.

On the other hand, the DG-Lie algebra $L_1$  
is not formal since the Maurer--Cartan equation for $y=y_2(e_1+e_2)+y_3e_3$ becomes 
$y_2^3=0$ (see \cite[Ex. 6.7.4]{LMDT} for more details), although the map $H^*(L_1)\to H^*(M)$ is injective. 

This example also shows that, without further assumptions, 
formality is not stable under specialisation.

\bigskip
\section{Corrigendum  to \cite{Ma15}}

Coline Emprin kindly informed us of a mistake in the proof of \cite[Lemma 5.3]{Ma15}. 
In fact, the equality $R_2\widehat{\alpha}-(-1)^{\overline{\alpha}}\widehat{\alpha}Q_2=0$ at line 13, page 105, holds whenever the $L_{\infty}$-morphism $f$ is linear, i.e., 
all its Taylor components of higher degree vanish. At the moment we don't know if the statement of \cite[Lemma 5.3]{Ma15} is correct (probably not).

In this section we follow the  general notation of \cite{Ma15}. The corrected version of Lemma~5.3 is the following result.

\begin{lemma}\label{lem.corrected5.3}
Let $(E(V,W;f)_r,d_r)$ be the Chevalley--Eilenberg spectral sequence of 
a linear $L_{\infty}$-morphism  
$f\colon (V,0,q_2,q_3,\ldots)\dashrightarrow (W,0,r_2,r_3,\ldots)$  of minimal $L_{\infty}[1]$-algebras. Assume that for some integer $k\ge 3$ we have 
$q_3=\cdots=q_k=0$ and 
$r_3=\cdots=r_k=0$. Then $d_r=0$ for every $2\le r<k$ and therefore
\[ E(V,W;f)_2=E(V,W;f)_3=\cdots=E(V,W;f)_{k}.\]
\end{lemma}

Replacing \cite[Lemma 5.3]{Ma15} with Lemma~\ref{lem.corrected5.3} does not affect 
the results from Lemma~6.1 to Corollary~6.7 of \cite{Ma15}, since in those cases Lemma 5.3 is always applied for $f$ equal to the identity. In particular, the main Theorem~6.3 is still valid. 

The formality transfer theorem \cite[Theorem 6.8]{Ma15} needs to be restated, and proved, in the following slightly weaker version.

\begin{theorem}\label{thm.formalitytransferinfinity} 
Let $f\colon V\dashrightarrow W$ be an $L_{\infty}$-morphism of $L_{\infty}[1]$-algebras such that:
\begin{enumerate}

\item $W$ is formal;

\item the map $f_*\colon E(V,V)^{p+1,-p}_p\to E(V,W;f)^{p+1,-p}_p$ is injective for every 
$p\ge 2$.
\end{enumerate}
Then also $V$ is formal.
\end{theorem}

\begin{proof}  
We have two morphisms of spectral sequences 
\[ \xymatrix{E(V,V)^{p,q}_r\ar[r]^{f_*\;}&E(V,W;f)^{p,q}_r&E(W,W)^{p,q}_r\ar[l]_{\;f^*}}\]
Denoting by $e_V,e_W$ and $e_f$ the Euler classes of $V,W$ and $f$ respectively we have 
$f_*(e_V)=e_f=f^*(e_W)$.
According to \cite[Corollary~6.6]{Ma15} we have $d_p(e_W)=0$ for every $p\ge 2$ and therefore
also $d_p(e_f)=d_p(f^*(e_W))=f^*(d_pe_W)=0\in E(V,W;f)^{p+1,-p}_p$ for every $p$.
The injectivity of $f_*\colon E(V,V)^{p+1,-p}_p\to E(V,W;f)^{p+1,-p}_p$ implies also 
$d_pe_V=0$ for every $p$ and therefore also $V$ is formal.  
\end{proof}

The above theorem, applied to morphisms of DG-Lie algebras, gives the following result.

\begin{theorem}\label{thm.formalitytransfer} 
Let $f\colon L\to M$ be a morphism of 
differential graded Lie algebras. Assume that 
\begin{enumerate}

\item\label{it1.thm.formalitytransfer}  $M$ is formal;

\item\label{it2.thm.formalitytransfer}   for every $p\ge 2$ the map 
\[ f\colon E(L,L)^{p+1,1-p}_p\to E(L,M)^{p+1,1-p}_p\]
is injective.
\end{enumerate}
Then also $L$ is formal.
\end{theorem}

\begin{proof}  If $V=L[1]$ and $W=M[1]$ are the  $L_{\infty}[1]$-algebras associated to $L$ and $M$, the degree shifting gives isomorphisms of spectral sequences (see \cite[p. 113]{Ma15})
\[ E(V,V)_r^{p,q}\simeq E(L,L)_r^{p,q+1},\qquad 
E(V,W;f)_r^{p,q}\simeq E(L,M)_r^{p,q+1},\]
and the conclusion follows from Theorem~\ref{thm.formalitytransferinfinity}.
\end{proof}

\medskip\noindent
\textbf{Acknowledgements.} We thank Coline Emprin for pointing out an error in the proof of Lemma~5.3 of \cite{Ma15}.
M.M. is partially supported by the PRIN 20228JRCYB ``Moduli spaces and special varieties'',  of  ``Piano Nazionale di Ripresa e Resilienza, Next Generation EU''.

\bigskip


\begin{thebibliography}{99}




\bibitem{BaMacoisotropic} R. Bandiera and M. Manetti, On coisotropic deformations of holomorphic submanifolds, J. Math. Sci. Univ. Tokyo {\bf 22} (2015), no.~1, 1--37; MR3329189, \texttt{arXiv:1301.6000v2}.


\bibitem{BMM2} R. Bandiera, M. Manetti and F. Meazzini, Formality conjecture for minimal surfaces of Kodaira dimension 0, Compos. Math. {\bf 157} (2021), no.~2, 215--235; MR4219218.


\bibitem{BMM1} R. Bandiera, M. Manetti and F. Meazzini, Deformations of polystable sheaves on surfaces: quadraticity implies formality, Mosc. Math. J. {\bf 22} (2022), no.~2, 239--263; MR4435758; \texttt{arXiv:1902.06486}.   


\bibitem{CM} F. Carocci and M. Manetti, Endomorphisms of Koszul complexes: formality and application to deformation theory, Rend. Circ. Mat. Palermo (2) {\bf 69} (2020), no.~1, 175--193; MR4148783


\bibitem{CE} C.~C. Chevalley and S. Eilenberg, Cohomology theory of Lie groups and Lie algebras, Trans. Amer. Math. Soc. {\bf 63} (1948), 85--124; MR0024908.

\bibitem{emprin1} C. Emprin, Obstruction theory to formality and homotopy equivalences. Thesis, Universit\'e Sorbonne Paris Nord, 2025. 

\bibitem{emprin} C. Emprin, Obstruction sequences to homotopy equivalences, 
\texttt{arXiv:2509.17895}.



\bibitem{FMpoisson} D. Fiorenza and M. Manetti, Formality of Koszul brackets and deformations of holomorphic Poisson manifolds, Homology Homotopy Appl. {\bf 14} (2012), no.~2, 63--75; MR3007085;
\texttt{arXiv:1109.4309v3 [math.QA]}.


\bibitem{GoMil1} W.~M. Goldman and J.~J. Millson, The deformation theory of representations of fundamental groups of compact K\"ahler manifolds, Inst. Hautes \'Etudes Sci. Publ. Math. No. 67 (1988), 43--96; MR0972343.



\bibitem{halsta} S. Halperin and J.~D. Stasheff, Obstructions to homotopy equivalences, Adv. in Math. {\bf 32} (1979), no.~3, 233--279; MR0539532.




\bibitem{hinich}
V. Hinich, Tamarkin's proof of Kontsevich formality theorem, Forum Math. {\bf 15} (2003), no.~4, 591--614; MR1978336; \texttt{arXiv:math/0003052}. 



\bibitem{HS} G.~P. Hochschild and J.-P. Serre, Cohomology of Lie algebras, Ann. of Math. (2) {\bf 57} (1953), 591--603; MR0054581.



\bibitem{algebraicBTT} D. Iacono and M. Manetti, An algebraic proof of 
Bogomolov--Tian--Todorov theorem, in {\it Deformation spaces}, 113--133, Aspects Math., E40, Vieweg + Teubner, Wiesbaden, 2010; MR2648584; 
{\texttt{arXiv:0902.0732}}. 


\bibitem{Ja} N. Jacobson, {\it Lie algebras}, Interscience Tracts in Pure and Applied Mathematics, No. 10, Interscience Publishers (a division of John Wiley \& Sons, Inc.), New York-London, 1962; MR0143793.

\bibitem{kaledin} D.~B. Kaledin, Some remarks on formality in families, Mosc. Math. J. {\bf 7} (2007), no.~4, 643--652, 766; MR2372207; 
\texttt{arXiv:math/0509699v4 [math.AG]}.

\bibitem{KKP} L. Katzarkov, M. Kontsevich and T.~G. Pantev, Hodge theoretic aspects of mirror symmetry, in {\it From Hodge theory to integrability and TQFT tt*-geometry}, 87--174, Proc. Sympos. Pure Math., 78, Amer. Math. Soc., Providence, RI, ; MR2483750.
\texttt{arXiv:0806.0107v1 [math.AG]}

\bibitem{KS} K. Kodaira and D.~C. Spencer, On deformations of complex analytic structures. I, II, Ann. of Math. (2) {\bf 67} (1958), 328--466; MR0112154

\bibitem{K} M. Kontsevich:
\emph{Deformation quantization of Poisson manifolds, I.} Letters
in Mathematical Physics~\textbf{66} (2003) 157-216;
{\texttt{arXiv:q-alg/9709040}}.

\bibitem{Loday} J.-L. Loday, {\it Cyclic homology}, second edition, 
Grundlehren der mathematischen Wissenschaften, 301, Springer, Berlin, 1998; MR1600246.


\bibitem{lunts} V.~A. Lunts, Formality of DG algebras (after Kaledin), 
J. Algebra {\bf 323} (2010), no.~4, 878--898; MR2578584;
\texttt{arXiv:0712.0996 [math.AG]}.

\bibitem{CCK} M. Manetti, Cohomological constraint on deformations of compact K\"ahler manifolds, Adv. Math. {\bf 186} (2004), no.~1, 125--142; MR2065509.

\bibitem{Ma15} M. Manetti, On some formality criteria for DG-Lie algebras, 
J. Algebra {\bf 438} (2015), 90--118; MR3353026;  
\texttt{arXiv:1310.3048v2 [math.QA]}.

\bibitem{indam} 
M. Manetti, Kodaira--Spencer formality of products of complex manifolds, in {\it Trends in contemporary mathematics}, 85--95, Springer INdAM Ser., 8, Springer, Cham, ; MR3586392.

\bibitem{LMDT} M. Manetti, {\it Lie methods in deformation theory}, Springer Monographs in Mathematics, Springer, Singapore, [2022] \copyright 2022; MR4485797.

\bibitem{MartPadova} E. Martinengo, Local structure of Brill-Noether strata in the moduli space of flat stable bundles, Rend. Semin. Mat. Univ. Padova {\bf 121} (2009), 259--280; MR2542146; \texttt{arXiv:0806.2056 [math.AG].}

\bibitem{MO} F. Meazzini and C. Onorati, Hyper-holomorphic connections on vector bundles on hyper-K\"{a}hler manifolds, Math. Z. {\bf 303} (2023), no.~1, Paper No. 17, 34 pp.; MR4521619

\bibitem{NijRich67} A. Nijenhuis and R.~W. Richardson, Deformations of Lie algebra structures, J. Math. Mech. {\bf 17} (1967), 89--105; MR0214636


\bibitem{Qui} D.~G. Quillen, Rational homotopy theory, Ann. of Math. (2) {\bf 90} (1969), 205--295; MR0258031.

\bibitem{Sa17} B. Saleh, Noncommutative formality implies commutative and Lie formality, Algebr. Geom. Topol. {\bf 17} (2017), no.~4, 2523--2542; MR3686405

\bibitem{simi} L. Simi, Higher Structures in Deformation Theory, PhD thesis, Sapienza University, (2019), \href{https://iris.uniroma1.it/retrieve/handle/11573/1220497/}{iris.uniroma1.it/retrieve/handle/11573/1220497/}

\bibitem{weibel} C.~A.~Weibel,
\emph{An Introduction to Homological Algebra},
Cambridge Studies in Advanced Mathematics, vol.~38,
Cambridge University Press, Cambridge, 1994.

\end{thebibliography}
\end{document}